\newtheorem{thm}{Theorem}[section]
\newtheorem{prop}[thm]{Proposition}
\theoremstyle{remark}
\newtheorem{rmk}{Remark}
\newcommand{\tdef}[1]{\textcolor{blue}{\emph{#1}}}
\newcommand{\horiz}{\begin{center}\rule{0.3\textwidth}{0.5pt}\end{center}}
\newcommand{\naturals}{\mathbb{N}_+}
\newcommand{\pred}{\texttt{pred}}
\author{Wenjie Fang \\ Univ Gustave Eiffel, CNRS, LIGM, F-77454 Marne-la-Vallée, France}
\title{Searching on the boundary of abundance for odd weird numbers}
\begin{document}

\maketitle

\abstract{Weird numbers are abundant numbers that are not pseudoperfect. Since their introduction, the existence of odd weird numbers has been an open problem. In this work, we describe our computational effort to search for odd weird numbers, which shows their non-existence up to $10^{21}$. We also searched up to $10^{28}$ for numbers with an abundance below $10^{14}$, to no avail. Our approach to speed up the search can be viewed as an application of reverse search in the domain of combinatorial optimization, and may be useful for other similar quest for natural numbers with special properties that depend crucially on their factorization.}

\horiz

\section{Introduction}

Given $N \in \naturals$, we denote by $\sigma(N)$ the sum of its divisors, and we define its \tdef{abundance}, denoted by $A(N)$, by $A(N) = \sigma(N) - 2N$. The number $N$ is \tdef{deficient} if $A(N) < 0$, \tdef{abundant} if $A(N) > 0$, and \tdef{perfect} if $A(N) = 0$. An abundant number is \tdef{pseudoperfect} (or \tdef{semiperfect}) if some subset of its proper factors sums to itself, or equivalently to $A(n)$. An abundant number is \tdef{weird} if it is not pseudoperfect. The smallest weird number is $70 = 2 \times 5 \times 7$, as $A(70) = 4$ is clearly not the sum of any subset of the factors of $70$, starting with $1, 2, 5$.

The notion of weird number was defined by Benkoski in \cite{weird-def}. Then Benkoski and Erdős proved in \cite{weird-pseudoperfect} that weird numbers have positive density among natural numbers. They also observed that, if $N$ is a weird number, then $pN$ is also weird for any prime $p > \sigma(n)$. This leads to the notion of \tdef{primitive weird numbers}, which are weird numbers without any proper factor that is also weird. Benkoski and Erdős suggested that there may be infinitely many primitive weird numbers, but there was no proof and it remains an open problem. Recently, Melfi proved in \cite{conditional-weird} that, conditional to some unsolved conjecture on gaps of consecutive primes, there are infinitely many primitive weird numbers of the form $2^k pq$, with $p$ and $q$ primes.

Another open problem also proposed in \cite{weird-pseudoperfect} is the existence of odd weird number. The author of \cite{conditional-weird} noted that a variation of their main result leads to the existence of odd weird numbers, always conditional to the same unsolved conjecture, but also to the existence of odd almost perfect numbers, \textit{i.e.}, odd numbers with abundance $-1$. This is another long-standing problem in number theory.

Given the difficulty of constructing odd weird numbers, and more generally of the study of weird numbers, we naturally turn to an experimental approach. There are some existing work on searching for large primitive weird numbers, started by Kravitz \cite{large-weird}. Recently, Amato, Hasler, Melfi and Parton \cite{primitive-with-factors} searched for primitive weird numbers with a given number of prime factors (counted with multiplicity), and found such numbers with 16 prime factors and 14712 digits. However, there is few effort specifically dedicated to the search of odd weird numbers. According to A006037 on The On-Line Encyclopedia of Integer Sequences \cite{oeis-weird}, in 2005, Robert A. Hearn checked that there is no odd weird number below $10^{17}$.

Our work consists of searching exhaustively for odd weird numbers below a certain upper bound. We obtained the following result.

\begin{thm} \label{thm:1e21}
  There is no odd weird number below $10^{21}$.
\end{thm}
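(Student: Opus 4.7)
The theorem is a computational claim, so my plan is to describe an exhaustive verification procedure and to justify that it can be made to run within reasonable time.

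\emph{Enumerate odd abundant $N<10^{21}$.} Since a weird number is abundant, I only need to examine odd abundant $N$ in this range. Iterating over the $\approx 5\cdot 10^{20}$ odd integers one by one and factoring each is hopeless, so instead I would traverse a tree whose nodes are partial prime factorizations $M=p_1^{a_1}\cdots p_j^{a_j}$ with $3\le p_1<p_2<\cdots<p_j$, branching by either incrementing $a_j$ or appending a new odd prime $p_{j+1}>p_j$ with exponent $1$. This canonical order visits every odd integer exactly once, and the values of $M$, $\sigma(M)$, and a running list of divisors can all be maintained incrementally as we descend, eliminating any explicit factoring step. At each node I would prune when $M>10^{21}$ or when a provable upper bound $U(M)$ on $\sigma(N)/N$ over all admissible completions $N$ of $M$ satisfies $U(M)\le 2$; such a bound can be obtained from $\sigma(M)/M$ together with a truncated Euler product over the primes $p>p_j$ still fitting within the remaining budget $10^{21}/M$.

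\emph{Pseudoperfect test.} For each odd abundant $N$ produced by the enumeration, I would certify pseudoperfectness by searching for a subset of the proper divisors summing to $A(N)$. Only divisors $\le A(N)$ can participate, and when $A(N)$ is small, as it almost always is on the enumeration boundary, a greedy descent through those divisors in decreasing order succeeds almost immediately; a standard dynamic program over divisors bounded by $A(N)$ serves as a rigorous fallback for the harder cases.

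\emph{Main obstacle.} The essential difficulty is keeping the enumeration both sound and tractable. A loose bound $U(M)$ would render the search infeasible, while an unsound one risks skipping a genuine odd weird number. So the heart of the proof, and presumably the role of the \emph{reverse search} framework advertised in the abstract, is the design and verification of a monotone, tight upper bound on $\sigma/N$ over all admissible completions of a partial factorization, sharp enough to cut away the vast subtrees that are doomed to remain deficient or to exceed $10^{21}$, together with an enumeration order that visits each odd integer exactly once so that coverage is guaranteed. By contrast, the pseudoperfect test itself is comparatively easy, since abundant $N$ in this range tend to have many small divisors and very small abundances relative to the divisor sums.
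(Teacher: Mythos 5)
Your search tree (children of a partial factorization obtained by multiplying by a prime $\geq$ the current largest prime factor), your pruning of subtrees that cannot reach abundance within the bound, and your subset-sum test on the divisors $\leq A(N)$ all match the paper's Algorithm~\ref{algo:weird}. However, there is one genuine gap that is in fact the central idea of the paper (and of its title): you propose to run the pseudoperfect test on \emph{every} odd abundant $N<10^{21}$, whereas the paper first proves (Proposition~\ref{prop:primitive-weird}) that the smallest odd weird number must be a \emph{primitive} abundant number --- if $N=kN'$ with $N'$ abundant and pseudoperfect via a subset $S$ of its proper divisors, then $kS$ witnesses pseudoperfectness of $N$. Consequently the search stops descending at the first abundant node on each branch and only tests numbers on the boundary between deficiency and abundance. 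Without this cutoff your enumeration must visit and test on the order of $10^{18}$ odd abundant numbers below $10^{21}$ (odd abundant numbers have positive density), which is computationally out of reach by many orders of magnitude; since the theorem \emph{is} the completed computation, an infeasible procedure does not prove it. Your plan is sound but not executable; the missing lemma is exactly what makes it executable.

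A secondary, fixable soundness issue: your proposed bound $U(M)$, a ``truncated Euler product over the primes $p>p_j$,'' does not correctly account for completions that increment the exponent $a_j$ of the current largest prime. Multiplying by another copy of $p_j$ multiplies $\sigma(N)/N$ by $\sigma(p_j^{a+1})/(p_j\,\sigma(p_j^{a})) \leq (p_j+1)/p_j$, which exceeds $q/(q-1)$ for the next prime $q$ whenever $q>p_j+1$, i.e.\ always for odd primes; so a product taken only over primes strictly larger than $p_j$ can undercount and prune a subtree containing an abundant number. The paper's Proposition~\ref{prop:barrier-formula} avoids this by bounding each of the at most $k_*$ additional prime factors (repeated or not) by the single ratio $p_*/(p_*-1)$, which dominates both cases. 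Your tighter product over distinct primes is usable, but only if the first factor is taken at $p_j$ itself rather than at the next prime.
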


By restricting the abundance, we also searched numbers with a much higher upper bound.

\begin{thm} \label{thm:1e28}
  There is no odd weird number below $10^{28}$ with abundance smaller than $10^{14}$.
\end{thm}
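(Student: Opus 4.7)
The approach is to extend the computational search underlying Theorem \ref{thm:1e21} by imposing the additional constraint $A(N) < 10^{14}$, which tightens the pruning of the reverse-search tree enough to push the upper bound on $N$ from $10^{21}$ all the way to $10^{28}$. The key observation is that the ratio $A(N)/N < 10^{-14}$ forces the abundancy $\sigma(N)/N$ to lie in a very narrow window just above $2$, which sharply restricts the admissible prime factorizations.

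First, the plan is to enumerate all odd $N < 10^{28}$ with $0 < A(N) < 10^{14}$ by reverse search on the tree of prime factorizations developed for Theorem \ref{thm:1e21}. At each node one maintains the current factorization together with the values of $N$ and $\sigma(N)$, updated multiplicatively as prime powers are adjoined. A node is pruned as soon as one can certify that no descendant can satisfy both $N < 10^{28}$ and $A(N) < 10^{14}$ simultaneously; the pruning exploits the factorization $\sigma(N)/N = \prod_{p^e \| N}(1 + p^{-1} + \cdots + p^{-e})$ together with upper bounds of the form $1 + p^{-1} + \cdots \leq 1/(1 - p^{-1})$ on the contribution of any further extension.

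Second, for each surviving candidate $N$ one certifies pseudoperfectness by exhibiting a subset $S$ of proper divisors with $\sum_{d \in S} d = A(N)$; such a subset can only contain divisors $d \leq A(N) < 10^{14}$, so the subset-sum instance is confined to the small divisors of $N$ and remains tractable by standard dynamic programming within a moderate memory budget. If every surviving candidate is found to be pseudoperfect, the theorem follows.

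The hard part will be balancing two competing pressures: the abundance bound must be exploited aggressively enough to keep the enumeration feasible, yet the pruning criteria must be provably conservative, so that no odd $N < 10^{28}$ with $A(N) < 10^{14}$ is inadvertently skipped. A secondary concern is the cost of the subset-sum checks for candidates with a great many divisors close to $10^{14}$, which likely requires distributing the computation and careful engineering of the dynamic programming step.
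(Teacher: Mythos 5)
Your overall plan --- a reverse search over the tree of odd factorizations, maintaining $N$ and $\sigma(N)$ incrementally, pruning branches that provably cannot matter, and certifying pseudoperfectness via a subset-sum restricted to the divisors not exceeding $A(N)$ --- is essentially the paper's method (Algorithm~\ref{algo:weird}, validated by Theorem~\ref{thm:algo-valid}). But two of your specific design choices would fail as stated. First, the bound $A(N)<10^{14}$ cannot be used to prune the \emph{enumeration}: for a deficient node $N$ and a prime $q$ exceeding its largest prime factor, the child $Nq$ has $A(Nq)=qA(N)+\sigma(N)$, which lands in the window $(0,10^{14})$ whenever $q$ sits just below $\sigma(N)/|A(N)|$, so no deficient branch can be discarded on abundance grounds a priori. (Also, $A(N)<10^{14}$ pins $\sigma(N)/N$ near $2$ only when $N$ itself is near $10^{28}$.) The paper therefore still enumerates \emph{all} odd primitive abundant numbers below $10^{28}$ --- pruning only at abundant nodes, justified by Proposition~\ref{prop:primitive-weird}, and via the deficiency barrier of Proposition~\ref{prop:barrier-formula} --- and uses the abundance bound solely to skip the expensive weirdness check for candidates with $A(N)\ge 10^{14}$; that, plus roughly $150$ core years of distributed computation, is what makes $10^{28}$ reachable, not a tighter tree pruning. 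Second, ``standard dynamic programming'' on a subset-sum target up to $10^{14}$ requires on the order of $10^{14}$ table entries or bits, far beyond any moderate memory budget; the paper instead runs a pruned recursive branch-and-bound over the sorted divisors $\le A(N)$ (processing elements in decreasing order, discarding those above the current target, and replacing the target by its complement when it exceeds half the remaining sum).

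Beyond these two points, keep in mind that for a statement of this kind the proof \emph{is} the completed computation: a correct algorithm together with a search plan establishes nothing until the search has actually been run to completion over the full range, which is the substance of the paper's claim for Theorem~\ref{thm:1e28}.
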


Both results were obtained through collaboration with the volunteer computing project yoyo@home during 2013--2015 (see \cite{ows-yoyo} for details).

Our method is quite close to that of \cite{primitive-with-factors}, searching for numbers by constructing their factorization directly. However, the search therein comes with a limited number of prime factors, while our limit is the size of the number. Furthermore, in their search, when trying to add a prime factor to a deficient number, they only consider at most a fixed number of possible primes, usually quite small (less than 10 in the given examples). Therefore, the numbers we check here is different from that of \cite{primitive-with-factors}.

As a historical remark, although our search predates \cite{primitive-with-factors}, the method itself may have been used earlier in the search of primitive abundant numbers with different properties. For instance, considering the computational power of personal computers in 2005, the effort of Hearn cited in \cite{oeis-weird} may have used the same method. Furthermore, as mentioned in \cite{primitive-with-factors}, it was proved by Dickson in \cite{dickson1} that, for any $k \in \naturals$, there is only a finite number of odd primitive non-deficient numbers with $k$ distinct prime factors. This also gives a strong incentive to search for odd weird numbers by multiplying distinct primes. As a side note, our method can be seen as a particularly simple application of reverse search \cite{reverse-search}.

The rest of the article is organized as follows. In Section~\ref{sec:tree}, we explain the tree structure we used for the searches. Then the algorithm we used is given and proved in Section~\ref{sec:algo}. Finally, implementation details are given in Section~\ref{sec:impl}.

\section{Tree structure of $\naturals$} \label{sec:tree}

We write $a \mid b$ when $b$ is a multiple of $a$. Then $(\naturals, \mid)$ is a partially ordered set (or simply \tdef{poset}) induced by the multiplicative structure of $\naturals$, with $1$ the only minimal element. By the fundamental theorem of arithmetic, every $N \in \naturals$ can be written uniquely as $N = p_1^{m_1} p_2^{m_2} \cdots p_k^{m_k}$, where $p_1 < \ldots < p_k$ are prime numbers, and $m_i > 0$ is the multiplicity of $p_i$ in $N$. For $N = p_1^{m_1} \cdots p_k^{m_k} > 1$, we define the \tdef{predecessor} $\pred(N)$ of $N$ to be
\[
  \pred(N) = p_1^{m_1} \cdots p_k^{m_k - 1}.
\]
Simply speaking, $\pred(N)$ is $N$ divided by its largest prime factor. The notion of predecessor is compatible with $(\naturals, \mid)$, in the sense that $\pred(N) \mid N$ and there is no element between $\pred(N)$ and $N$ in $(\naturals, \mid)$. The following property is trivial.

\begin{prop} \label{prop:descendants}
  Given $N \in \naturals$ with $p_*$ its largest prime factor, every $N'$ such that $\pred(N') = N$ takes the form $N' = Np$, where $p$ is a prime and $p \geq p_*$.
\end{prop}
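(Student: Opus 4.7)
The plan is a direct case analysis based on the definition of the predecessor. Write $N = p_1^{m_1} \cdots p_k^{m_k}$ with $p_1 < \cdots < p_k$, so $p_* = p_k$. Take any $N' \in \naturals$ with $\pred(N') = N$, and write its factorization as $N' = q_1^{n_1} \cdots q_l^{n_l}$ with $q_1 < \cdots < q_l$. By definition of $\pred$, we have $\pred(N') = q_1^{n_1} \cdots q_l^{n_l-1}$, and this equals $N$. The whole argument is extracting what this equality forces on the primes $q_i$ and their exponents.

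Next I split on the value of $n_l$. If $n_l \geq 2$, then the largest prime in $\pred(N')$ is still $q_l$; matching the factorization with $N$ forces $q_l = p_k = p_*$ and $n_l - 1 = m_k$, while $q_i^{n_i} = p_i^{m_i}$ for $i < l$. In this case $N' = N \cdot q_l = N \cdot p_*$, so we can take $p = p_*$. If instead $n_l = 1$, then $\pred(N') = q_1^{n_1} \cdots q_{l-1}^{n_{l-1}}$, whose largest prime is $q_{l-1}$; equality with $N$ now forces $l - 1 = k$, $q_i = p_i$ and $n_i = m_i$ for $i \leq k$, and the extra prime satisfies $q_l > q_{l-1} = p_*$. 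Thus $N' = N \cdot q_l$ with $q_l$ prime and strictly greater than $p_*$.

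In both cases we obtain $N' = Np$ with $p$ prime and $p \geq p_*$, which is the claimed form. Conversely (though not strictly required by the statement), any such $N' = Np$ clearly satisfies $\pred(N') = N$, since the largest prime factor of $Np$ is $p$ and dividing it out once returns $N$.

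There is essentially no hard step: the argument is entirely driven by unique factorization and the two simple possibilities for whether the topmost prime of $N'$ appears with multiplicity one or more. This matches the author's description of the proposition as ``trivial'', and the proof fits in a few lines of case analysis.
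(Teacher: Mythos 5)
Your proof is correct; the paper itself gives no argument, simply declaring the proposition trivial, and your case analysis on the multiplicity of the largest prime of $N'$ is exactly the routine unique-factorization argument the author leaves implicit. (The only unmentioned edge case, $l=1$ with $n_l=1$, is vacuous since then $\pred(N')=1$, which has no largest prime factor and so cannot equal $N$.)
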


Using $\pred(N)$, we may endow $\naturals$ with a tree structure, where $1$ is the only root, and the parent of $N$ is $\pred(N)$. We thus have an infinite tree $T$, where each number $N$ has infinitely many \tdef{children}, which are numbers $N'$ with $\pred(N') = N$. The \tdef{descendants} of $N$ are numbers whose chain of parents, obtained by applying $\pred$ iteratively, contains $N$. In other words, descendants of $N$ can be obtained by multiplying $N$ with any number of primes larger than the largest prime factor of $N$. We will perform our search of odd weird numbers on $T$, as it gives automatically the factorization for each number $N$ visited, which is essential for checking whether $N$ is weird. We will call $T$ our \emph{search tree}.

\section{Search algorithm} \label{sec:algo}

We want to search for odd weird numbers in $T$ by a depth-first search. To reduce the computation, we need to remove irrelevant parts of $T$. We already limit ourselves to a finite tree by imposing an upper bound in our search, thus search only a finite part of the infinite tree $T$. However, we may further prune the search tree $T$ using number-theoretic reasoning.

The search of odd weird number is closely related to primitive abundant numbers by the following proposition.

\begin{prop} \label{prop:primitive-weird}
  Suppose that $N$ is the smallest odd weird number, then it is also an odd primitive abundant number.
\end{prop}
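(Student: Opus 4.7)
The plan is to prove the contrapositive-style statement: if $N$ is the smallest odd weird number, then every proper divisor of $N$ must be deficient, which is exactly the definition of primitive abundant (given that $N$ itself is abundant because it is weird).

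First I would note that $N$ is odd abundant by the definition of weirdness, so only the primitivity needs to be established. The plan is then to assume for contradiction that some proper divisor $M \mid N$, $M < N$, is non-deficient, meaning $M$ is either perfect or abundant. Since $N$ is odd, so is $M$. I would split into three cases according to the status of $M$: (i) $M$ is weird; (ii) $M$ is abundant but pseudoperfect; (iii) $M$ is perfect.

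Case (i) contradicts the minimality of $N$ directly, because $M$ would be a smaller odd weird number. For the remaining two cases, the key tool is the following lifting lemma, which I would state and prove in one line: if $M$ is pseudoperfect (or perfect) and $M$ properly divides $N$, then $N$ is pseudoperfect. Indeed, writing $k = N/M > 1$ and taking a set $S$ of proper divisors of $M$ with $\sum_{d \in S} d = M$ (all proper divisors of $M$ in the perfect case, the pseudoperfect witness otherwise), the numbers $\{kd : d \in S\}$ are distinct proper divisors of $N = kM$ summing to $N$. Cases (ii) and (iii) then contradict the weirdness of $N$.

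I do not expect a real obstacle here; the only point requiring a moment of care is checking that the scaled set $\{kd : d\in S\}$ consists of \emph{proper} and \emph{distinct} divisors of $N$. Properness follows from $d < M$, hence $kd < kM = N$; distinctness is inherited from $S$; and divisibility follows from $kd \mid kM = N$. Once the lifting lemma is in place, the proposition is immediate, and the conclusion is that all proper divisors of $N$ are deficient, so $N$ is an odd primitive abundant number.
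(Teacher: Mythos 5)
Your proof is correct and follows essentially the same route as the paper: the heart of both arguments is the scaling observation that a set $S$ of proper divisors of a divisor $M$ summing to $M$ yields, after multiplication by $k = N/M$, a set of distinct proper divisors of $N$ summing to $N$, contradicting the weirdness of $N$, while a weird proper divisor is excluded by minimality. The only (harmless) difference is that you explicitly handle a perfect proper divisor as a separate case, which the paper's phrasing in terms of a primitive abundant divisor $N'$ sidesteps.
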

\begin{proof}
  From definition, we know that $N$ is abundant. If $N$ is not a primitive abundant number, then there is some primitive abundant number $N'$ such that $N = kN'$ for some $k > 1$. By the minimality of $N$, we know that $N'$ is pseudoperfect. However, in this case, there is a subset $S$ of its proper divisors of $N'$ that sums to $N'$, meaning that the set $kS$, obtained by multiplying each element in $S$ by $k$, sums to $N = kN'$. Every element in $kS$ is also a proper divisor of $N$. Thus, $N$ is also pseudoperfect, which leads to a contradiction. We thus have our claim.
\end{proof}

By Proposition~\ref{prop:primitive-weird}, in our search on $T$, if we encounter a pseudoperfect number, then we don't need to check its descendants for odd weird numbers.

As mentioned before, We only search on numbers below a certain upper bound $M$. Such an upper bound also helps restricting the factorization of numbers to check. In \cite{abundant-prime-factor}, Iannucci gave an algorithm for the smallest abundant number not divisible by the first $k$ primes, and also its asymptotic behavior. For our need, we only need the following results.

\begin{prop}[See Table~1 in \cite{abundant-prime-factor}] \label{prop:abundant-prime-factor}
  The smallest odd abundant number not divisible by $3$ and $5$ is
  \[
    7^2 \times 11^2 \times 13 \times 17 \times \cdots \times 61 \times 67 < 2.01 \times 10^{25}.
  \]
  The smallest odd abundant number not divisible by $3$, $5$ and $7$ is
  \[
    11^2 \times 13^2 \times 17 \times 19 \times \cdots \times 131 \times 137 < 4.90 \times 10^{52}.
  \]
  The omitted parts are consecutive primes.
\end{prop}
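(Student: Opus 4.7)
The plan is to prove both parts by exhibiting the stated products as abundant and then verifying their minimality by a bounded tree search. The key tool throughout is the multiplicativity of the abundancy index $h(n) = \sigma(n)/n$, together with the closed form $h(p^k) = (p^{k+1}-1)/(p^k(p-1))$ and its increasing-in-$k$ limit $h(p^\infty) = p/(p-1)$.

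First, I would verify that each of the two exhibited products is abundant by a direct arithmetic computation of $h(n) = \prod_{p^k \| n} h(p^k)$ and checking that $h(n) > 2$. This reduces to multiplying a small list of explicit rationals.

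For minimality, I would run a depth-first branch-and-bound search on the subtree of $T$ (as defined in Section~\ref{sec:tree}) whose nodes use only the allowed primes: odd primes $\geq 7$ for the first claim, $\geq 11$ for the second. At each node $n$, with current largest prime factor $p_*$ at multiplicity $m_*$, I would maintain the running value $h(n)$ and apply three pruning rules. First, if $h(n) > 2$, record $n$ as a candidate and halt descent, since by Proposition~\ref{prop:descendants} every descendant of $n$ in $T$ is strictly larger than $n$. Second, prune the branch when the quantity
\[
  h(n) \cdot \frac{p_*/(p_*-1)}{h(p_*^{m_*})} \cdot \prod_{p_* < p \leq P_{\max},\, p \text{ allowed}} \frac{p}{p-1},
\]
which upper-bounds $h(n')$ over all descendants $n'$ (using $h(p^k) \leq p/(p-1)$ prime-by-prime), fails to exceed $2$. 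Third, prune whenever $n$ itself already exceeds the current best abundant candidate.

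The main obstacle is justifying termination, which requires a finite bound $P_{\max}$ on the primes that need to be considered. Such a bound exists because the partial products $\prod_{7 \leq p \leq P}\, p/(p-1)$ diverge with $P$, so past some finite threshold, incorporating further primes cannot lift the abundancy above $2$ against the accumulated size of $n$. A numerical check shows that $P_{\max} = 67$ suffices for the first part and $P_{\max} = 137$ for the second. Completeness of the restricted search then yields minimality, since any smaller odd abundant number coprime to $15$ (respectively $105$) must appear in this bounded search space and would have been recorded by rule one, contradicting the claimed minimality of the exhibited product.
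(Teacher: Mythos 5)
The paper itself does not prove this proposition: it is imported directly from Table~1 of Iannucci \cite{abundant-prime-factor}, so any self-contained argument is necessarily your own route. Your overall plan --- certify abundance of the two exhibited numbers by multiplying the local factors $h(p^k)$, then establish minimality by an exhaustive bounded search on the subtree of $T$ generated by the allowed primes --- has the right shape, and the three pruning rules are individually sound. The abundance check is fine: the product of the $h(p^k)$ for $7^2\times 11^2\times 13\times\cdots\times 67$ does come out just above $2$ (about $2.0096$).

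The gap is in completeness, i.e.\ in the choice of $P_{\max}$. Your termination argument is stated backwards: the divergence of $\prod_{7\le p\le P} p/(p-1)$ shows that adding further primes can \emph{always} eventually push the abundancy index past $2$; what actually bounds the search is the size constraint $n<C$, where $C$ is the exhibited candidate. Making that precise for the first claim: a competitor $n<C$ coprime to $15$ must satisfy $\prod_{p\mid n}p/(p-1)>2$ with all prime factors distinct and at least $7$, and since this product over $7,11,\dots,59$ is only about $1.993$, the number $n$ must have at least $15$ distinct prime factors; hence $n\ge (7\cdot 11\cdots 59)\cdot q\approx 6.4\times 10^{19}\,q$ with $q$ its largest prime factor, which only yields $q\lesssim 3\times 10^{5}$ (and refining via the requirement that the remaining primes reach $2-2/q$ still leaves $q$ in the thousands). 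Declaring $P_{\max}=67$ is therefore not a ``numerical check'' but essentially the content of the minimality claim itself: to use it you must first rule out every abundant competitor below $C$ with a prime factor in $(67,\,3\times 10^{5}]$, and your restricted search never examines those. The fix is either to run the search with the honestly derived bound on $q$ (your pruning rules keep this feasible), or to prove separately, by an estimate in the spirit of Proposition~\ref{prop:barrier-formula}, that no node of the allowed subtree containing a prime larger than $67$ has an abundant descendant below $C$. The same objection applies, with larger constants, to the second claim and $P_{\max}=137$.
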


By Proposition~\ref{prop:abundant-prime-factor} , when we search for odd weird numbers below $2.01 \times 10^{25}$ (resp. $4.90 \times 10^{52}$), we may look only at numbers whose smallest prime factor is at most $5$ (resp. $7$).

Another possibility to prune the search tree is to avoid exploring numbers that can never lead to abundant numbers within the upper bound $M$. For instance, for $M$ large enough, there is no need to explore numbers of the form $3p$ with $p$ a prime larger than $M^{1/3}/3$. It is because, by Proposition~\ref{prop:descendants}, the descendants of $3p$ take the form $3pq$ with $q$ a prime number larger than $p$, but they are clearly not abundant, and their descendants goes above $M$, again by Proposition~\ref{prop:descendants}. Similar reasoning can be extended as follows.

\begin{prop} \label{prop:barrier-formula}
  Let $N < M$ with $p_*$ its largest prime factor. Suppose that $k_*$ is the largest integer such that $N p_*^{k_*} < M$, and that
  \begin{equation} \label{eq:barrier}
    \frac{\sigma(N)}{N} \cdot \frac{p_*^{k_*}}{(p_*-1)^{k_*}} < 2.    
  \end{equation}
  Then all descendants of $N$ in $T$ smaller than $M$ are deficient.
\end{prop}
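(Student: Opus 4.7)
The plan is to show that every descendant $N' < M$ of $N$ in $T$ satisfies $\sigma(N')/N' < 2$, i.e.\ is deficient. By iterating Proposition~\ref{prop:descendants}, any such descendant can be written as $N' = N \cdot q_1 q_2 \cdots q_t$ for some $t \geq 0$ and primes $q_1 \leq q_2 \leq \cdots \leq q_t$ all satisfying $q_i \geq p_*$ (these are the primes adjoined along the path from $N$ down to $N'$ in $T$). Since $N' \geq N p_*^t$, the bound $N' < M$ together with the definition of $k_*$ forces $t \leq k_*$, so it suffices to show that each multiplication by one of the $q_i$ scales $\sigma/n$ by at most $p_*/(p_*-1)$.

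The core step is the following elementary lemma: for any integer $m > 1$ with largest prime factor $p'$ and any prime $q \geq p'$,
\[
  \frac{\sigma(mq)}{mq} \;\leq\; \frac{\sigma(m)}{m} \cdot \frac{p'}{p'-1}.
\]
I would prove this by splitting into two cases. If $q > p'$, then $\sigma(mq)/(mq) = \sigma(m)/m \cdot (q+1)/q$, and $(q+1)/q \leq p'/(p'-1)$ is equivalent to $q \geq p'-1$, which holds \emph{a fortiori}. If $q = p'$, writing $m = m_0 (p')^v$ with $\gcd(m_0,p')=1$ and $v \geq 1$, the ratio $[\sigma(mq)/(mq)]/[\sigma(m)/m]$ collapses to $((p')^{v+2}-1)/(p'((p')^{v+1}-1))$, and the desired bound reduces, after cross-multiplication, to $(p')^{v+2} \geq (p')^2 - p' + 1$, which is immediate for $v \geq 1$.

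Applying this lemma iteratively along the chain $N, N q_1, N q_1 q_2, \ldots, N q_1 \cdots q_t = N'$ yields
\[
  \frac{\sigma(N')}{N'} \;\leq\; \frac{\sigma(N)}{N} \cdot \left(\frac{p_*}{p_*-1}\right)^t,
\]
since at each intermediate stage the current largest prime factor is $\geq p_*$ and $x \mapsto x/(x-1)$ is decreasing on $(1,\infty)$, so the per-step factor from the lemma is bounded by $p_*/(p_*-1)$. Combined with $t \leq k_*$ and the hypothesis \eqref{eq:barrier}, this gives $\sigma(N')/N' < 2$, as required. The only step that takes real thought is the case $q = p'$ of the lemma, where bumping up the multiplicity of an existing prime affects $\sigma/m$ in a slightly less transparent way than adjoining a brand-new prime factor; everything else is a straightforward descent in $T$ together with an exponent count.
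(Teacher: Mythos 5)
Your proof is correct and follows essentially the same route as the paper: decompose $N' = N q_1 \cdots q_t$ with primes $q_i \geq p_*$, note $t \leq k_*$ from $N p_*^t \leq N' < M$, and bound the growth of $\sigma(n)/n$ by a factor $p_*/(p_*-1)$ per adjoined prime using the monotonicity of $x \mapsto x/(x-1)$. The only cosmetic difference is in handling repeated primes: the paper invokes $1+q+\cdots+q^k \leq (1+q)^k$ to get $r(N') \leq r(N)\prod_i (q_i+1)/q_i$ in one shot, whereas you verify the single-step bound $\sigma(m p')/(m p') \leq \frac{\sigma(m)}{m}\cdot\frac{p'}{p'-1}$ by direct cross-multiplication, which is equally valid.
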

\begin{proof}
  We define $r(n) = \sigma(n)/n$. Suppose that $N' < M$ is a descendant of $N$ in $T$. We can write $N' = N q_1 q_2 \cdots q_k$, with all $q_i$'s primes with $q_i > p_*$. Here some of the $q_i$'s may be the same. We thus have $k \leq k_*$, and then
  \[
    r(N') \leq r(N) \prod_{i=1}^k \frac{q_i+1}{q_i} < r(N) \frac{p_*^{k_*}}{(p_*-1)^{k_*}}.
  \]
  The first inequality holds with repeated $q_i$ because $1+q+\ldots+q^k \leq (1+q)^k$. We note that it holds even when some $q_i$ is equal to $p_*$. The second inequality holds as $f(x) = (x+1)/x$ is greater than $1$ and strictly decreasing on $(0, +\infty)$. By Equation~\eqref{eq:barrier}, we thus have $r(N') < 2$, meaning that $N'$ is deficient.
\end{proof}

As Equation~\eqref{eq:barrier} is relatively easy to check, we may use Proposition~\ref{prop:barrier-formula} to prune the search tree efficiently. We may construct more precise conditions to further prune the search tree, but such gain seems to be minimal. Heuristically, it is because there are relatively few non-square-free primitive abundant numbers, and Proposition~\ref{prop:barrier-formula} is quite tight for square-free ones.

Using all previous results, we construct the following abstract algorithm (Algorithm~\ref{algo:weird}) we used to establish Theorem~\ref{thm:1e21}~and~\ref{thm:1e28}. We suppose the existence of the following functions:
\begin{itemize}
\item \texttt{CheckWeird(N)}: checks whether $N$ is weird, and reports if affirmative;
\item \texttt{CheckBarrier(N)}: checks the condition in Proposition~\ref{prop:barrier-formula} on $N$ and returns whether it is satisfied;
\item \texttt{NextPrime(p)}: returns the smallest prime strictly larger than $p$.
\end{itemize}
For the parameter $N$, we may suppose that its factorization is given. The precise implementation of these functions, which are crucial for the final performance, will be discussed in the next section.

\begin{algorithm}[!thbp]
  \caption{Searching for odd weird numbers up to an upper bound under $4.90 \times 10^{52}$}
  \label{algo:weird}
  \begin{algorithmic}
    \Function{Search}{$N$}
    \If{$\sigma(N) > 2N$}
    \State \texttt{CheckWeird}($N$);
    \Else
    \State $p \leftarrow$ the largest prime factor of $N$;
    \While{not \texttt{CheckBarrier}($Np$) and $Np < $ Upperbound}
    \State \Call{Search}{$Np$};
    \State $p \leftarrow$ \texttt{NextPrime}($p$);
    \EndWhile
    \EndIf
    \EndFunction

    \State \Call{Search}{3}; \Call{Search}{5}; \Call{Search}{7};
  \end{algorithmic}
\end{algorithm}

\begin{thm} \label{thm:algo-valid}
  If there is an odd weird number below the given upper bound under $4.90 \times 10^{52}$, then Algorithm~\ref{algo:weird} will find it.
\end{thm}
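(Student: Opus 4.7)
The plan is to show that, whenever an odd weird number lies below the bound $M \le 4.90 \times 10^{52}$, the algorithm visits and reports the smallest such number, which I call $W$. By Proposition~\ref{prop:primitive-weird}, $W$ is primitive abundant, so every proper divisor of $W$ is non-abundant. By Proposition~\ref{prop:abundant-prime-factor}, the smallest prime factor of $W$ is $3$, $5$, or $7$, placing $W$ in the subtree of $T$ explored by one of the three initial calls.

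Next, I would trace the unique root-to-$W$ path in $T$, writing it as $N_1, N_2, \dots, N_t = W$ with $N_1 \in \{3,5,7\}$ and $N_{i+1} = N_i \cdot q_i$ for a prime $q_i$ at least as large as the largest prime factor of $N_i$ (Proposition~\ref{prop:descendants}). The goal is to prove by induction on $i$ that $\textsc{Search}(N_i)$ is invoked. The inductive step at $i < t$: since $N_i$ properly divides $W$, it is not abundant, so $\sigma(N_i) \le 2 N_i$ and execution enters the else-branch; the while loop then iterates a prime $p$ starting from the largest prime factor $p_*$ of $N_i$. I must show that $p$ reaches $q_i$ without the loop stopping, which amounts to verifying both continuation conditions for every prime $p_* \le p \le q_i$ encountered.

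The upper-bound condition $N_i p < M$ is immediate from $N_i p \le N_i q_i = N_{i+1} \le W < M$. For the barrier condition, abbreviate the barrier quantity of Proposition~\ref{prop:barrier-formula} by $B(N') = (\sigma(N')/N')(p'_*/(p'_*-1))^{k_*(N')}$. At $p = q_i$, $\texttt{CheckBarrier}(N_i q_i)$ must return \texttt{false}: if $B(N_i q_i) < 2$, then Proposition~\ref{prop:barrier-formula} would force $W$, a descendant of $N_i q_i$ below $M$, to be deficient, contradicting its abundance. For the intermediate primes $p_* \le p < q_i$, the key step is a monotonicity lemma: as $p$ ranges over primes $\ge p_*$, the quantity $B(N_i p)$ is non-increasing. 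Combined with $B(N_i q_i) \ge 2$, this gives $B(N_i p) \ge 2$ throughout, so the barrier check returns \texttt{false} all the way to $q_i$. Once $\textsc{Search}(W)$ is invoked, $\sigma(W) > 2W$ sends control into the if-branch and $\texttt{CheckWeird}(W)$ reports $W$ as weird.

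The main obstacle is the monotonicity lemma. It splits into two sub-cases that I would verify by closed-form computation. In the case $p = p_*$, the new factor merely raises the multiplicity of $p_*$: one finds $k_*(N p_*) = k_*(N) - 1$ and $\sigma(Np_*)/(Np_*) = (\sigma(N)/N) \cdot (p_*^{m+2}-1)/(p_*(p_*^{m+1}-1))$ where $m$ is the multiplicity of $p_*$ in $N$. In the case $p > p_*$, the prime $p$ is coprime to $N$, so $\sigma(Np)/(Np) = (\sigma(N)/N)(1 + 1/p)$, while both $p/(p-1)$ and $k_*(Np)$ decrease with $p$ (the former strictly, the latter non-strictly). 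In every comparison, the geometric factor $(p/(p-1))^{k_*(Np)}$ provides the dominant variation with $p$ and overwhelms the milder change in the $\sigma$-ratio, yielding the desired inequality; the algebraic verification is routine and I would relegate it to the reader.
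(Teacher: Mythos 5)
Your overall architecture matches the paper's proof: the initial calls are justified by Proposition~\ref{prop:abundant-prime-factor}, the refusal to explore children of abundant nodes by Proposition~\ref{prop:primitive-weird}, and the early exit of the while loop by a monotonicity property of the barrier quantity. Your version is more explicit (induction along the root-to-$W$ path), and your argument for two primes $p' > p > p_*$ is sound: each of $\frac{p+1}{p}$, $\frac{p}{p-1}$ and $k_*(Np)$ is non-increasing in $p$ on that range, so $B(Np) \geq B(Np')$ there.

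The gap is in the case you declare routine and relegate to the reader: the comparison between $p = p_*$ and the primes $p > p_*$. The monotonicity lemma is \emph{false} there. Your own formulas show why: passing from $N$ to $Np_*$ multiplies the $\sigma$-ratio by $\frac{p_*^{m+2}-1}{p_*^{m+2}-p_*}$, which is \emph{smaller} than the factor $\frac{p+1}{p}$ contributed by any prime $p < p_* + p_*^2 + \cdots + p_*^{m+1}$, and when $k_*(Np_*) = 0$ the geometric factor equals $1$ and cannot "overwhelm" anything. Concretely, take $N = 135 = 3^3 \cdot 5$ (so $p_* = 5$, $m = 1$) and $M = 1000$. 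Then $B(135 \cdot 5) = \sigma(675)/675 = 1240/675 \approx 1.84 < 2$, while $B(135 \cdot 7) = \sigma(945)/945 = 1920/945 \approx 2.03 \geq 2$ (both exponents $k_*$ being $0$ here), and $945$ is in fact abundant. So $B$ \emph{increases} from $p=5$ to $p=7$, the first link of your chain of inequalities breaks, and the while loop — which starts at $p = p_*$ — would exit before reaching $q_i = 7$. Note that the paper's own proof asserts the persistence of the barrier condition only for $p$ \emph{strictly} larger than the largest prime factor of $N$, which is precisely the true part of your lemma; neither your argument nor the paper's covers the first iteration $p = p_*$, and a complete proof requires a separate treatment of that case rather than the algebraic verification you defer, which cannot be carried out as stated.
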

\begin{proof}
  We only need to show that the parts of $T$ not visited by Algorithm~\ref{algo:weird} do not contain the smallest odd weird number satisfying the given condition. First, Proposition~\ref{prop:abundant-prime-factor} ensures that all odd abundant numbers under $4.90 \times 10^{52}$ are descendants of $3$, $5$ or $7$, thus visited by one of the initial calls of the search function.

  In the recursive search function, we first note that we do not explore children of any abundant $N$. Proposition~\ref{prop:primitive-weird} ensures that we will not miss the smallest odd weird number, which must be primitively abundant. Then, we see that the while loop goes through children of $N$ in increasing order. We notice that if the condition in \ref{prop:barrier-formula} is satisfied for some $Np$ with $p$ a prime larger than the largest prime factor of $N$, then it is also satisfied for any $Np'$ with $p' > p$ a prime. This justifies the use of a while loop, ensuring that once \texttt{CheckBarrier}($Np$) returns true, no unvisited child of $N$ leads to an odd weird number satisfying the given condition. By similar reasoning, the second condition of the while loop also ensures that we do not explore any number larger than the upper bound. These are the only occasions where we stop short of exploring $T$, none of them missing out the smallest odd weird number satisfying the given condition.
\end{proof}

\begin{rmk}
  Algorithm~\ref{algo:weird} can be seen as an application of the reverse search \cite{reverse-search}, originally designed for exhaustive enumeration and combinatorial optimization, in the context of number theory. Briefly speaking, a reverse search algorithm searches the configuration space through a spanning forest. To this end, the algorithm assign a predecessor to each configuration, and when searching, it only goes from $u$ to its neighbors $v$ whose predecessor is $u$. This ensures that the search avoids multiple evaluation on the same configuration. To be efficient, an easy-to-compute predecessor function is needed. In our case, the predecessor is given by $\pred$, but we do not need to compute it, as we know how to produce all children of a given number.
\end{rmk}

\section{Implementation details} \label{sec:impl}

We have implemented Algorithm~\ref{algo:weird} to search for odd weird numbers in the ranges described in Theorem~\ref{thm:1e21}~and~\ref{thm:1e28}. We note that, to establish Theorem~\ref{thm:1e21}, by  Proposition~\ref{prop:abundant-prime-factor}, we do not need to explore the descendants of $7$ in $T$. We used the MPIR library \cite{mpir} to deal with large numbers. However, for performance, 64-bit integers are used whenever possible. To save computation time, we store $N$ along with its factorization and $\sigma(N)$. In this way, we can access the largest prime of $N$ directly, and we can also compute $\sigma(Np)$ with a constant number of operations using $\sigma(N)$.

We now detail the implementation of the three functions used in Algorithm~\ref{algo:weird}.

For \texttt{CheckBarrier}, to avoid floating point operations, we check \eqref{eq:barrier} with both sides multiplied by the denominator of the left-hand side. In practice, to reduce overhead, we do not call \texttt{CheckBarrier} for each children, but only from time to time.

For \texttt{CheckWeird}, as we store the factorization of $N$, it is easy to list all factors of $N$. We first compute all factors of $N$ not larger than $A(n)$, sort them into a list $L$, then solve the subset sum problem on $L$ with $A(n)$ as target. A naïve algorithm with a few optimizations is used. We go through elements in $L$ in decreasing order, and explore both possibilities of taking each element or not. This is done by a recursive subroutine, with the target as a parameter. In each recursive call, we ignore elements larger than the target. If the target is larger than half the sum of the remaining elements, we change the target into the sum minus the target. We then call the recursive subroutine in both cases of taking the current element or not. This solve the subset sum problem. When an abundance condition is imposed, as for Theorem~\ref{thm:1e28}, we only solve the subset sum problem for numbers satisfying the condition.

For \texttt{NextPrime}, it is clear that we never need primes larger than $2^{64} < 1.84 \times 10^{19}$. For smaller primes, we precomputed the first 5000000 primes using the sieve of Eratosthenes. For larger primes, we simply increment the number and perform the Baillie–PSW primality test implemented by Thomas R. Nicely and released into the public domain (see source code in \cite{ows-data}), which was claimed by Nicely to be deterministically correct below $2^{64}$.

As the volume of computation is large, we parallelize the search by cutting $T$ into thousands of subtrees. We then collaborated with the volunteer computing project yoyo@home, where volunteers all over the world donated computational power to work on various research projects. For details of how the parallelization worked and how we ensure correct computation, readers are referred to \cite{ows-yoyo}. We used roughly $80$ core years to establish Theorem~\ref{thm:1e21}, and roughly $150$ core years for Theorem~\ref{thm:1e28}, where one core year represent the volume of computation done by a CPU core (at that time, \textit{i.e.}, 2013--2015) in one year. The source code of the parallel application and the data is available on \cite{ows-data}.

\begin{rmk}
  Our algorithm, with its pruning strategy, can be adapted to search for other kinds of numbers with special properties related to abundance, for instance odd almost perfect numbers and others mentioned in \cite{guy}. To gain efficiency, it may be the best to perform the search simultaneously for several kinds of numbers.
\end{rmk}

\bibliographystyle{alpha}
\bibliography{abundance-boundary}

\begin{thebibliography}{AHMP19}

\bibitem[AF96]{reverse-search}
D.~Avis and K.~Fukuda.
\newblock Reverse search for enumeration.
\newblock {\em Discrete Appl. Math.s}, 65(1-3):21--46, 1996.

\bibitem[AHMP19]{primitive-with-factors}
G.~Amato, M.~F. Hasler, G.~Melfi, and M.~Parton.
\newblock Primitive abundant and weird numbers with many prime factors.
\newblock {\em J. Number Theory}, 201:436--459, 2019.

\bibitem[BE74]{weird-pseudoperfect}
S.~J. Benkoski and P.~Erd{\H{o}}s.
\newblock On weird and pseudoperfect numbers.
\newblock {\em Math. Comput.}, 28(126):617--623, 1974.

\bibitem[Ben72]{weird-def}
S.~J. Benkoski.
\newblock Elementary problem and solution {E}2308.
\newblock {\em Amer. Math. Monthly}, 79:774, 1972.

\bibitem[Dic13]{dickson1}
L.~E. Dickson.
\newblock Finiteness of the odd perfect and primitive abundant numbers with n
  distinct prime factors.
\newblock {\em Amer. J. Math.}, 35(4):413--422, 1913.

\bibitem[Fan]{ows-data}
W.~Fang.
\newblock Repository of source code and data of odd weird search performed on
  yoyo@home.
\newblock \url{https://github.com/fwjmath/ows-data}.

\bibitem[FB18]{ows-yoyo}
W.~Fang and U.~Beckert.
\newblock Parallel tree search in volunteer computing: a case study.
\newblock {\em J. Grid Comput.}, 16(4):647--662, 2018.

\bibitem[GHJM15]{mpir}
B.~Gladman, W.~Hart, and et~al. J.~Moxham.
\newblock {MPIR}: {M}ultiple {P}recision {I}ntegers and {R}ationals, 2015.
\newblock Version 2.7.0, \url{http://mpir.org}.

\bibitem[Guy04]{guy}
R.~Guy.
\newblock {\em Unsolved problems in number theory}, volume~1.
\newblock Springer Science \& Business Media, 2004.

\bibitem[Ian05]{abundant-prime-factor}
D.~E. Iannucci.
\newblock On the smallest abundant number not divisible by the first $k$
  primes.
\newblock {\em Bull. Belg. Math. Soc. Simon Stevin}, 12(1):39--44, 2005.

\bibitem[Kra76]{large-weird}
S.~Kravitz.
\newblock A search for large weird numbers.
\newblock {\em J. Recreat. Math.}, 9(2):82--85, 1976.

\bibitem[Mel15]{conditional-weird}
G.~Melfi.
\newblock On the conditional infiniteness of primitive weird numbers.
\newblock {\em J. Number Theory}, 147:508--514, 2015.

\bibitem[{OEI}22]{oeis-weird}
{OEIS Foundation Inc.}
\newblock Weird numbers, 2022.
\newblock Entry A006037 in The On-Line Encyclopedia of Integer Sequences,
  \url{https://oeis.org/A006037}.

\end{thebibliography}

\end{document}